\newtheorem{thm}{Theorem}[section]
\newtheorem{cor}[thm]{Corollary}
\newtheorem{claim}{Claim}[thm]
\newtheorem{mainthm}{Theorem}
\theoremstyle{definition}
\newtheorem{defn}[thm]{Definition}
\DeclareMathOperator{\cf}{cf}
\DeclareMathOperator{\ns}{NS}
\DeclareMathOperator{\im}{Im}
\DeclareMathOperator{\ssup}{ssup}
\DeclareMathOperator{\nacc}{nacc}
\DeclareMathOperator{\acc}{acc}
\DeclareMathOperator{\dom}{dom}
\DeclareMathOperator{\otp}{otp}
\DeclareMathOperator{\Top}{top}
\newcommand{\s}{\subseteq}  
\newcommand{\sq}{\sqsubseteq}  
\newcommand{\br}{\blacktriangleright}
\newcommand*\axiomfont[1]{\textsf{\textup{#1}}}
\newcommand\zfc{\axiomfont{ZFC}}
\newcommand\ch{\axiomfont{CH}}
\renewcommand\top[1]{{\Top\,#1}}
\renewcommand\mid{\mathrel{|}\allowbreak}
\renewcommand{\restriction}{\mathbin\upharpoonright}    
\title[A counterexample related to Komj\'ath and Weiss]{A counterexample related to\\a theorem of Komj\'ath and Weiss}
\author {Rodrigo Carvalho}
\address{Department of Mathematics, Bar-Ilan University, Ramat-Gan 5290002, Israel.}
\author {Assaf Rinot}
\address{Department of Mathematics, Bar-Ilan University, Ramat-Gan 5290002, Israel.}
\urladdr{http://www.assafrinot.com}
\begin{document}
\begin{abstract} In a paper from 1987, Komj\'ath and Weiss proved that for every regular topological space $X$
of character less than $\mathfrak b$, if $X\rightarrow(\top{\omega+1})^1_\omega$, then $X\rightarrow(\top \alpha)^1_\omega$ for all $\alpha<\omega_1$.
In addition, assuming $\diamondsuit$, they constructed a space $X$ of size continuum, of character $\mathfrak b$,
satisfying $X\rightarrow(\top{\omega+1})^1_\omega$, but not $X\rightarrow(\top{\omega^2+1})^1_\omega$.
Here, a counterexample space with the same characteristics is obtained outright in $\zfc$.
\end{abstract}
\dedicatory{This paper is dedicated to Istv\'{a}n Juh\'{a}sz on the occasion of his 80th birthday}
\date{A preprint as of July 2, 2023 (Happy Birthday Istv\'{a}n!). For the latest version, visit \textsf{http://www.assafrinot.com/paper/61}.}
\maketitle

\section{Introduction}

For two topological spaces $X,Y$ and a cardinal $\theta$,  the arrow notation
$$X\rightarrow(\top Y)^1_\theta$$
asserts that for every coloring $c:X\rightarrow\theta$,
there exists an homeomorphism $\phi$ from $Y$ to $X$ such that $c$ is constant over $\im(\phi)$.

In \cite{MR911048}, Komj\'{a}th and Weiss studied the partition relation $X\rightarrow(\top\alpha)^1_\omega$, where $\alpha$ is a countable ordinal endowed with the usual order topology.
The first result of their paper is a pump-up theorem
for regular topological spaces of character less than $\mathfrak b$;\footnote{Recall that $\mathfrak b$ denotes the least size of an unbounded submfaily of ${}^\omega\omega$,
where a subfamily $\mathcal F\s{}^\omega\omega$ is \emph{bounded} if, for some function $g:\omega\rightarrow\omega$,
$\{ n<\omega\mid f(n)\le g(n)\}$ is finite for all $f\in\mathcal F$.}
the theorem asserts that for any such space $X$, if $X\rightarrow(\top{\omega+1})^1_\omega$, then moreover $X\rightarrow(\top \alpha)^1_\omega$ for all $\alpha<\omega_1$.\footnote{The published proof had a small gap that was later rectified in \cite{MR4550370} based on a suggestion of Weiss.}

To show that the bound $\mathfrak b$ cannot be improved, Theorem~4 of \cite{MR911048} 
gives an example, assuming $\diamondsuit$, of a regular topological space $X$ of size and character $\aleph_1$
such that $X\rightarrow(\top{\omega+1})^1_\omega$, but not $X\rightarrow(\top{\omega^2+1})^1_\omega$.
Question~2 of the same paper asks whether there is a $\zfc$ example of a regular space $X$
satisfying $X\rightarrow(\top{\omega+1})^1_\omega$ and failing $X\rightarrow(\top{\alpha})^1_\omega$ for some countable ordinal $\alpha>\omega^2$.
The first main result of this note answers this question in the affirmative.

\begin{mainthm}\label{thma} There exists a zero-dimensional regular space $X$ of size continuum, of character $\mathfrak b$,
satisfying $X\rightarrow(\top{\omega+1})^1_\omega$, but not $X\rightarrow(\top{\omega^2+1})^1_1$.
\end{mainthm}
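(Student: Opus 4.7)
The plan is to realize the counterexample in ZFC by substituting for the $\diamondsuit$-guided recursion of Komj\'ath and Weiss a $<^*$-increasing, $<^*$-unbounded scale $\vec f = \langle f_\alpha : \alpha < \mathfrak b\rangle$ in $(\omega^\omega,<^*)$ of minimal length. The space $X$ will live on a set $D \cup P$ of size $2^{\aleph_0}$, where $D$ is a set of to-be-isolated points and $P = \{p_\alpha : \alpha < \mathfrak b\}$ is a set of non-isolated ``limit'' points in bijection with the scale. At each $p_\alpha$ I would assign a clopen neighborhood base indexed by a cofinal subset of $\mathfrak b$, decoded from $f_\alpha$ and the tail $\langle f_\beta : \beta \ge \alpha\rangle$. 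The design is driven by two opposing demands: (i) every basic neighborhood of $p_\alpha$ meets $P$ only at $p_\alpha$, so that $P$ is closed and discrete in $X$; and (ii) any would-be neighborhood sub-base at $p_\alpha$ of size below $\mathfrak b$ decodes back to a $<^*$-bounded subfamily of $\vec f$, whose dominant produces a basic neighborhood of $p_\alpha$ missed by the sub-base.

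With this setup, zero-dimensional regularity, size $2^{\aleph_0}$, and character $\mathfrak b$ are routine to verify. The negative relation $X \not\to (\top{\omega^2+1})^1_1$ follows immediately from (i): the top of any topological copy of $\omega^2+1$ would be a non-isolated point that is a limit of non-isolated points, which is impossible since $P$ is closed discrete in $X$.

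The main obstacle is verifying the positive relation $X \to (\top{\omega+1})^1_\omega$. Given $c : X \to \omega$, a pigeonhole on $c\restriction P$ secures an unbounded $A \s \mathfrak b$ and a color $i<\omega$ with $c(p_\alpha) = i$ for all $\alpha \in A$. The task reduces to locating $\alpha\in A$ at which isolated points of color $i$ accumulate. Arguing by contradiction, for each $\alpha\in A$ I pick a basic $U_\alpha\ni p_\alpha$ whose intersection with $D$ avoids color $i$, and decode $U_\alpha$ through the scale into a function $g_\alpha \in \omega^\omega$ together with a finite ``header'' encoding how $c^{-1}(i)\cap D$ evades $U_\alpha$. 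A pigeonhole on the countably many possible headers furnishes an unbounded $A' \s A$ on which all headers coincide, and the shared header should permit the assembly of a single function $g^*\in\omega^\omega$ with $g_\alpha \le^* g^*$ for unboundedly many $\alpha\in A'$; pulling $g^*$ back through the encoding produces a $<^*$-dominant for an unbounded subfamily of $\vec f$, contradicting the minimality of $\mathfrak b$. Coordinating the topology of $X$, the finite data that a countable coloring provides at each $p_\alpha$, and the unboundedness of the scale is the delicate combinatorial heart of the argument—and explains why character exactly $\mathfrak b$ is both necessary (to permit the construction) and sufficient (for the pigeonhole to bite).
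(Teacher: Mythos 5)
There is a fatal structural flaw in your design: conditions (i) and (ii) --- that $P$ be closed and discrete in $X$ while $D$ consists of isolated points --- already refute the positive relation. Consider the $2$-coloring $c$ with $c\restriction D\equiv 0$ and $c\restriction P\equiv 1$. A monochromatic copy of $\omega+1$ is a nontrivial convergent sequence together with its limit, all of one color; it can lie neither inside $D$ (a set of isolated points is a discrete subspace) nor inside $P$ (a closed discrete set is likewise a discrete subspace). Hence your space would satisfy $X\nrightarrow(\top{\omega+1})^1_2$, and no amount of care in verifying the positive relation can succeed. The error is in thinking that the negative relation $X\nrightarrow(\top{\omega^2+1})^1_1$ should be obtained by forbidding non-isolated points from accumulating at non-isolated points: that prohibition is too strong, since it kills monochromatic copies of $\omega+1$ for the coloring that separates the two tiers. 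Any counterexample space must have non-isolated points converging to non-isolated points, and the failure at $\omega^2+1$ has to be arranged by a subtler mechanism.

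The paper's construction does exactly this. The space lives on a streamlined tree $T$; the non-isolated points are the nodes $t$ of limit levels of countable cofinality, and each such $t$ is assigned a ladder $B_t$ of order-type $\omega$ consisting largely of \emph{other non-isolated nodes}, with neighborhoods $N_t(f,j)$ indexed by a family $\mathcal F\subseteq{}^\omega\omega$ of size $\mathfrak b$ witnessing $m_f(\omega,\omega,\omega,\omega)=\mathfrak b$. The positive relation reduces to finding, for any coloring $c$, a node $t$ with infinitely many $s\in B_t$ of color $c(t)$; this is extracted not from a scale but from the uncountable chromaticity of the graph induced by a ladder system on the tree (the $\zfc$ input being Soukup's tree $T(S)$ over a stationary co-stationary $S\subseteq\omega_1$). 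The negative relation is obtained because each ladder has order-type only $\omega$: any homeomorphic copy of $\omega^2+1$ would force $\omega^2$-many points below some $t$, and the dominating properties of $\mathcal F$ then produce an infinite discrete subset of a compact set. Finally, character exactly $\mathfrak b$ is not something one must engineer to be ``necessary''; the lower bound $\chi(X)\ge\mathfrak b$ falls out of the Komj\'ath--Weiss pump-up theorem once the two partition properties are in place. If you want to salvage your approach, you must abandon the closed-discrete requirement on $P$ and instead control \emph{how many} non-isolated points accumulate at each non-isolated point (order-type $\omega$, not $\omega^2$), which is precisely the role of the ladders in the paper.
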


In \cite{MR4550370}, the Komj\'{a}th-Weiss counterexample was addressed from a different angle.
There, a weakening of $\diamondsuit$ called $\clubsuit_F$ was introduced and shown to be sufficient for the construction of the same $\aleph_1$-sized example.
Furthermore, it is established there that $\clubsuit_F$ is consistent with the failure of $\ch$.
Here, we provide an alternative way to get an $\aleph_1$-sized countexample space together with a large continuum:

\begin{mainthm}\label{thmb} After forcing to add any number of Cohen reals, there exists a zero-dimensional regular space $X$ of size $\aleph_1$, of character $\mathfrak b$,
satisfying $X\rightarrow(\top{\omega+1})^1_\omega$, but not $X\rightarrow(\top{\omega^2+1})^1_1$.
\end{mainthm}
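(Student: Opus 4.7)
The plan is to reduce the theorem to the $\clubsuit_F$-based construction of \cite{MR4550370}, by showing that $\clubsuit_F$ is available in any Cohen extension. Fix a ground model $V$, and let $G$ be generic for Cohen forcing $\mathbb{P}$ adjoining $\kappa$-many reals. Since $\mathbb{P}$ is ccc, $\omega_1^{V[G]}=\omega_1^V$; and since Cohen reals are not dominating, $\mathfrak{b}=\aleph_1$ in $V[G]$ whenever $\mathfrak{b}^V=\aleph_1$. These two observations together will pin down $|X|$, $\omega_1$, and the character of $X$ in the extension.

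The crucial step is to argue that $\clubsuit_F$ holds in $V[G]$. One approach is \emph{preservation}: begin in a model $V\models\clubsuit_F$ (consistent with $\neg\ch$ by \cite{MR4550370}) and verify that a $\clubsuit_F$-sequence from $V$ continues to guess correctly in $V[G]$. By the ccc, any potential counterexample in $V[G]$ is supported on an $\aleph_1$-sized subforcing, so it suffices to handle the case $\kappa=\omega_1$ and carry out a standard name-analysis, exploiting the $\sigma$-centeredness of Cohen forcing to absorb the functional parameter of $\clubsuit_F$ into the guessing predictions of the ground-model sequence. An alternative is \emph{creation}: use a fixed block of $\omega_1$-many Cohen reals from $G$ to define, via a suitable coding, a $\clubsuit_F$-sequence outright in $V[G]$, then verify the guessing property by a density argument against names.

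Once $\clubsuit_F$ is secured in $V[G]$, the space $X$ is built exactly as in \cite{MR4550370}; it is zero-dimensional regular, of size $\aleph_1$ and character $\mathfrak{b}=\aleph_1$, and the two partition relations are witnessed by the same proofs as in the ground-model scenario. The main obstacle is precisely the first step: unlike plain $\clubsuit$, the principle $\clubsuit_F$ involves guessing a function-valued parameter, and Cohen forcing freely adds new $\omega$-sequences that must be successfully predicted. Controlling these new functions via the finite-support combinatorics of Cohen forcing is where the technical work lies; it is the analogue here of the classical arguments showing that $\clubsuit$ and its variants are preserved by nice ccc forcings.
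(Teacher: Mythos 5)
There is a genuine gap, and it sits exactly at the step you yourself flag as the crux: securing $\clubsuit_F$ in the Cohen extension. Neither of your two routes works as described. The preservation route quietly strengthens the hypothesis of the theorem: Theorem~\ref{thmb} is asserted over an \emph{arbitrary} ground model, whereas you begin by assuming $V\models\clubsuit_F$. The creation route is where the real obstruction lives, and it is not a matter of ``suitable coding'': a $\clubsuit$-type (hence $\clubsuit_F$-type) guess requires a single ladder $C_\delta$ to be \emph{entirely} contained in the target set, so a density argument must produce one condition deciding infinitely many membership statements; with finite-support Cohen conditions, a $\Delta$-system/centeredness argument gives you pairwise compatibility of the deciding conditions $p_{\gamma}$, but the union of infinitely many of them is not a condition, and the argument collapses. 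This is precisely why the paper does \emph{not} try to obtain $\clubsuit_F$ after Cohen forcing (the introduction is explicit that the forcing adds only ``consequences of'' such principles). Instead, Section~\ref{sec2} proves a strictly weaker, single-point guessing statement --- a ladder system $\langle C_\delta\mid\delta\in E^{\omega_1}_\omega\rangle$ such that every coloring $c:\omega_1\to\omega$ admits $\gamma\in C_\delta$ with $c(\gamma)=c(\delta)$, i.e., an uncountably chromatic Hajnal--M\'at\'e graph over $\omega_1$ --- and this survives the finite-support combinatorics because only \emph{one} pair $\{\gamma,\delta\}$ needs to be decided, by a single common extension of $p_\gamma$ and $p_\delta$. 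That weaker object is then fed into Theorem~\ref{spacefromtree} (with $T={}^{<\omega_1}1$), whose hypothesis is exactly ``the graph $(T,E_{\vec A})$ is uncountably chromatic'' rather than any $\clubsuit$-style containment.

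Two smaller points. First, your reduction to an $\aleph_1$-sized subforcing via the ccc is fine for locating a counterexample to guessing, but it does not remove the obstruction above. Second, the claim that $\mathfrak b=\aleph_1$ in $V[G]$ is neither needed nor always true for ``any number'' of Cohen reals (e.g., adding a single Cohen real need not collapse $\mathfrak b$ to $\aleph_1$); the theorem asks for character $\mathfrak b$, whatever its value, and this comes for free from Theorem~\ref{spacefromtree} together with \cite[Theorem~1]{MR911048}. To repair your argument you would either have to prove outright that Cohen forcing adds a $\clubsuit_F$-sequence (for which you have given no mechanism, and which the above obstruction suggests is at best delicate), or, as the paper does, isolate the weaker chromatic guessing principle that a finite condition can actually verify.
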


The preceding is a special case of a general theorem that identifies a class of notions of forcing that inevitably add consequences of higher analogs of $\clubsuit_F$.
These notions of forcing include Cohen forcing, but also Prikry and Magidor forcing.

\subsection{Notation and conventions}
For a regular cardinal $\kappa$, we denote by $H_\kappa$
the collection of all sets of hereditary cardinality less than $\kappa$.
$E^\kappa_\chi$ denotes the set $\{\alpha < \kappa \mid \cf(\alpha) = \chi\}$, and
$E^\kappa_{\geq \chi}$, $E^\kappa_{<\chi}$, $E^\kappa_{\neq\chi}$, etc.\ are defined analogously.

For a set of ordinals $a$, we write $\ssup(a) := \sup\{\alpha + 1 \mid\alpha \in a\}$, $\acc^+(a) := \{\alpha < \ssup(a) \mid \sup(a \cap \alpha) = \alpha > 0\}$,
$\acc(a) := a \cap \acc^+(a)$, and $\nacc(a) := a \setminus \acc(a)$.

\section{Topological spaces based on trees}\label{sec1}

Following \cite{paper23}, we say that $T$ is a \emph{streamlined tree} iff there exists some cardinal $\kappa$ such that $T\s{}^{<\kappa}H_\kappa$
and, for all $t\in T$ and $\alpha<\dom(t)$, $t\restriction\alpha\in T$. 
For a subset $E\s\kappa$, we let $T\restriction E:=\{t\in T\mid \dom(t)\in E\}$.
For a subset $T'\s T$, a \emph{ladder system} over $T'$ is a sequence $\vec A=\langle A_t\mid t\in T'\rangle$ such that,
for every  $t\in T'$, $A_t$ is a cofinal subset of $t_\downarrow:=\{ s\in T\mid s\subsetneq t\}$ with $\otp(A_t)=\cf(\dom(t))$.
For every ladder system $\vec A=\langle A_t\mid t\in T'\rangle$, we attach a symmetric relation $E_{\vec A}\s[T]^2$, as follows:
$$E_{\vec A} = \{ \{s,t\} \mid t \in T',s \in A_t \}.$$

\begin{thm}\label{spacefromtree} Suppose that:
\begin{itemize}
\item $T\s{}^{<\kappa}H_\kappa$ is a streamlined tree;
\item $\vec A$ is a ladder system over $T'=T\restriction E^\kappa_\omega$;
\item The graph $(T,E_{\vec A})$ is uncountably chromatic.
\end{itemize}

Then there exists a zero-dimensional topology $\tau$ on $T$ such that $X:=(T,\tau)$ is a regular space of character $\mathfrak{b}$ satisfying $X \rightarrow (\top(\omega + 1))^1_{\omega}$ and $X \nrightarrow (\top(\omega^{2} + 1))^1_1$.
\end{thm}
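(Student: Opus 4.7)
The plan is to construct $\tau$ from $T$ and $\vec A$, then verify each of the three required properties. For the topology: declare every $t \in T \setminus T'$ isolated, and for each $t \in T'$ fix an enumeration $A_t = \langle s^t_n \mid n < \omega\rangle$ in $\sqsubseteq$-increasing order. Each $t \in T'$ then receives a clopen neighborhood base built from $A_t$ together with controlled tree-extensions above the $s^t_n$'s, parameterized against a $\mathfrak b$-unbounded family in $({}^\omega\omega, \leq^*)$. The parameterization is arranged so that simultaneously: (a) each basic open is clopen, yielding zero-dimensionality and hence regularity; (b) any sequence $\tau$-converging to some $t \in T'$ lies eventually in $A_t$; and (c) the character at each $t \in T'$ is exactly $\mathfrak b$.

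For $X \to (\top{\omega+1})^1_\omega$: by (b), $\{t\} \cup B$ is a topological copy of $\omega+1$ for every $t \in T'$ and every infinite $B \subseteq A_t$. So if $c : T \to \omega$ is a coloring with no monochromatic copy of $\omega+1$, then for every $t \in T'$ the set $F_t := \{n < \omega \mid c(s^t_n) = c(t)\}$ must be finite (otherwise $\{t\} \cup \{s^t_n : n \in F_t\}$ would be a monochromatic copy of color $c(t)$). Define $\psi : T \to \omega$ by well-founded induction along $\sqsubseteq$: for $t \in T'$, pick $\psi(t) \in \omega \setminus \{\psi(s^t_n) : n \in F_t\}$ (possible since $F_t$ is finite); for $t \in T \setminus T'$, set $\psi(t) := 0$. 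The pair-coloring $c^\ast(t) := \langle c(t), \psi(t)\rangle$ is then a proper $\omega$-coloring of $(T, E_{\vec A})$: on an edge $\{s^t_n, t\}$, either $c(s^t_n) \ne c(t)$ and the first coordinates differ, or $n \in F_t$ and the second coordinates differ by the choice of $\psi(t)$. This contradicts the uncountable chromaticity of $(T, E_{\vec A})$.

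For $X \nrightarrow (\top{\omega^2+1})^1_1$: suppose for contradiction that $\phi : \omega^2+1 \to X$ is a topological embedding, and set $p := \phi(\omega^2)$, $p_n := \phi(\omega(n+1))$, $q_{n,m} := \phi(\omega n + m + 1)$. Since $p$ is a limit of limit points in the image, $p, p_n \in T'$. By (b), $p_n \to p$ gives $p_n = s^p_{k_n}$ for all large $n$, with $\langle k_n\rangle$ strictly increasing; and $q_{n,m} \to p_n$ as $m \to \infty$ gives, for each fixed $n$, $q_{n,m} \in A_{p_n}$ for $m$ large. Now $A_{p_n} \subseteq (p_n)_\downarrow$ while $A_p \cap (p_n)_\downarrow = \{s^p_k : k < k_n\}$ is finite, so for each $n$ we may pick $m(n)$ large enough that $q_{n,m(n)} \in A_{p_n} \setminus A_p$. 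But $\omega n + m(n) + 1 \to \omega^2$ in $\omega^2+1$ forces $\langle q_{n,m(n)}\rangle \to p$ in $X$ by the homeomorphism $\phi$, which by (b) must then lie eventually in $A_p$---contradicting our choice.

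The main obstacle is the precise construction of $\tau$ meeting (a), (b), and (c) simultaneously. Properties (a) and (b) are easily achieved together via ``tight'' basic neighborhoods (essentially $\{t\}$ together with a cofinite tail of $A_t$), but these alone give character $\aleph_0$ rather than $\mathfrak b$. Promoting the character to exactly $\mathfrak b$ without breaking (b) requires a subtle layering of ``extension'' pieces above each $s^t_n$ into the basic neighborhoods, indexed against a $\mathfrak b$-unbounded family in $({}^\omega\omega, \leq^*)$. Threading this needle is the technical heart of the theorem.
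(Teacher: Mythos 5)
Your outline follows the paper's strategy in broad strokes---neighborhoods of $t\in T'$ built from the ladder plus tree-extensions parameterized by a family of functions, the positive relation via an auxiliary coloring contradicting uncountable chromaticity, the negative relation by extracting points of a putative copy of $\omega^2$ that escape the ladder at $\phi(\omega^2)$---but the step you defer as ``the technical heart'' is genuinely missing, and your remarks about how to carry it out contain misconceptions that would derail the construction. First, the ``tight'' baseline ($\{t\}$ together with a cofinite tail of $A_t$) does not form a topology at all: such a set contains no neighborhood of its members $s\in A_t\cap T'$, whose own basic neighborhoods must reach strictly below $s$. This is exactly why the paper's basic neighborhoods are unions of tree-intervals $(a_{t,i}(f(i)),B_t(i)]$---an interval absorbs the sufficiently high neighborhoods of each of its points, making the system point-coherent---and why the ladder $A_t$ must first be replaced by a refined ladder $B_t\s T\restriction(E^\kappa_1\cup E^\kappa_\omega)$ with $A_t\cap T'\s B_t$: below a node of $A_t$ lying at a level of uncountable cofinality there is no $\omega$-sequence $a_{t,i}(j)$ converging to it to anchor the interval. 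Accordingly, your property (b) is only true with $B_t$ in place of $A_t$, the positive relation must be reduced to finding $t$ with $\{s\in B_t\mid c(s)=c(t)\}$ infinite (your $\psi$/$c^*$ argument then works, but one must check that the monochromatic $E_{\vec A}$-edge it produces lands inside $B_t$, which is where $A_t\cap T'\s B_t$ is used), and your negative-relation argument---which is a legitimate repackaging of the paper's ``infinite discrete subset of a compact set'' contradiction---goes through only after (b) is actually proved.

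Second, ``a $\mathfrak b$-unbounded family in $({}^\omega\omega,\le^*)$'' is not the right object, and this is precisely where (b) lives or dies. What is needed is: for every infinite $A\s\omega$ and every $g:A\rightarrow\omega$ there is $f\in\mathcal F$ with $g(n)\le f(n)$ for infinitely many $n\in A$. A merely unbounded family guarantees escape infinitely often on $\omega$, but those escape points may all avoid $A$; the paper invokes the nontrivial fact $m_f(\omega,\omega,\omega,\omega)=\mathfrak b$ to obtain a family of size $\mathfrak b$ with the stronger property, closed under pointwise maximum so that the $N_t(f,j)$ form a filter base. Finally, your diagnosis of the character is backwards: one does not ``promote'' the character to $\mathfrak b$ as a design goal. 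The upper bound is simply $|\mathcal F\times\omega|=\mathfrak b$, and the lower bound is automatic from Komj\'ath--Weiss's pump-up theorem once $X\rightarrow(\top{\omega+1})^1_\omega$ and $X\nrightarrow(\top{\omega^2+1})^1_\omega$ are established; the fat neighborhoods are forced on you by coherence of the neighborhood system and by the need to destroy copies of $\omega^2+1$ (a first-countable space with these hypotheses would contain them), not by a desire for large character.
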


\begin{proof}
Let $\vec{A}=\langle A_t\mid t\in T'\rangle$ denote the above ladder system.
We now build another ladder system $\langle B_t\mid t\in T'\rangle$ with the property that $A_t\cap T'\s B_t\s T\restriction(E^\kappa_1\cup E^\kappa_{\omega})$ for all $t\in T'$.
To this end, for each $t\in T'$, we consider three options:
\begin{itemize}
\item[$\br$] If $A_t\cap T'$ is infinite, then let $B_t:=A_t\cap T'$. 

\item[$\br$] If $A_t\cap T'$ is finite, but $t\in T\restriction(E^\kappa_\omega\cap\acc^+(E^\kappa_\omega))$, 
then let $B_t$ be some cofinal subset of $t_\downarrow\cap E^\kappa_\omega$ of order-type $\omega$, with $A_t\cap T'\s B_t$.

\item[$\br$] Otherwise, let $B_t$ be some cofinal subset of $t_\downarrow$ of order-type $\omega$ all of whose nodes $s$ with $\cf(\dom(s))\neq 1$ are the ones from $A_t\cap T'$.
\end{itemize}

Next, for every $t\in T$ and every $i<\omega$, we consider two cases depending on whether $B_t(i)$ --- the $i^{\text{th}}$-element of $B_t$ --- belongs to $T'$:
\begin{itemize}
\item[$\br$] If $B_t(i)\in T'$, then let $\langle a_{t,i}(j)\mid j <\omega\rangle$ be a strictly increasing sequence of nodes converging to $B_{t}(i)$.
We also require that $a_{t,i+1}(0)$ be bigger than $B_t(i)$ for all $i<\omega$.
\item[$\br$] Otherwise, let $\langle a_{t,i}(j)\mid j <\omega\rangle$  be the constant sequence whose sole element is $B_{t}(i)\restriction(\max(\dom(B_t(i))))$.
\end{itemize}

$$\frame{
\setlength{\unitlength}{0.8cm}
\begin{picture}(18.3,2)
\put(1.8,0.25){\textsf{Figure 1: Illustration of the ladders assigned to a node $t\in T\restriction E^\kappa_\omega\cap\acc^+(E^\kappa_\omega)$.}}
\put(0,1){\vector(1,0){17.5}}
\linethickness{0.6mm}
\put(17.7,0.9){$t$}

\put(0.65,0.92){$|$}
\put(0.3,1.45){$a_{t,0}(0)$}
\put(1.7,1.35){$\cdots$}
\put(2.8,0.92){$|$}
\put(2.5,1.45){$a_{t,0}(j)$}
\put(4.1,1.35){$\cdots$}
\put(5.45,0.75){\line(0,0){0.5}}
\put(5,1.45){$B_t(0)$}
\put(6.5,0.92){$|$}
\put(7,0.92){$|$}
\put(7.5,0.75){\line(0,0){0.5}}
\put(8,0.92){$|$}
\put(8.5,0.92){$|$}
\put(8.2,1.35){$\cdots$}
\put(7.6,1.35){$\cdots$}
\put(7,1.35){$\cdots$}
\put(6.4,1.35){$\cdots$}
\put(9.45,0.75){\line(0,0){0.5}}
\put(9.05,1.45){$B_t(i)$}
\put(11.5,0.92){$|$}
\put(12,0.92){$|$}
\put(12.5,0.92){$|$}
\put(11,0.92){$|$}
\put(10.4,1.45){$a_{t,i+1}(0)$}
\put(13.3,0.92){$|$}
\put(12.6,1.45){$a_{t,i+1}(j)$}
\put(14,0.92){$|$}
\put(14.5,0.92){$|$}
\put(15.4,0.75){\line(0,0){0.5}}

\put(14.6,1.4){$B_{t}(i+1)$}
\put(16.7,1.25){$\cdots$}
\end{picture}
}$$		

\begin{claim}\label{claim211} There exist a family $\mathcal F\s{}^\omega\omega$ of size $\mathfrak b$ such that:
\begin{itemize}
\item for every $A\in[\omega]^\omega$, for every function $g:A\rightarrow\omega$, there exists $f\in\mathcal F$ for which $\{ n\in A\mid g(n)\le f(n)\}$ is infinite;
\item $\mathcal F$ is closed under pointwise maximum, i.e., for all $f,g\in\mathcal F$, the function $n\mapsto\max\{f(n),g(n)\}$ is in $\mathcal F$, as well.
\end{itemize}
\end{claim}
\begin{proof} This is well-known, but we include an argument anyway.
By \cite[Proposition~2.4]{paper51}, $m_f(\omega,\omega,\omega,\omega)=\mathfrak b$, hence, we may fix a family $\mathcal H$ of functions from $\omega$ to $[\omega]^{<\omega}$
such that, for every $A\in[\omega]^\omega$, and every function $g:A\rightarrow\omega$, there exists $h\in\mathcal H$ for which $\{ n\in A\mid g(n)\in h(n)\}$ is infinite.
Now, let $\mathcal F$ denote the smallest subfamily of ${}^\omega\omega$ that covers $\{\sup\mathrel{\circ} h\mid h \in \mathcal{H}\}$
and that is closed under pointwise maximum.\footnote{We use $\sup$ instead of $\max$, since $\sup(x)$ is meaningful for any set $x$, including $x=\emptyset$.}
Clearly, $\mathcal F$ is as sought.
\end{proof}

Let $\mathcal F$ be given by the claim. For all $s,t\in T$, denote $(s,t]:=\{ x\in T\mid s\s x\subsetneq t\}$.
We shall now define a topology $\tau$ over $T$ by defining a system $\langle \mathcal N_t\mid t\in T\rangle$ of local bases.
For every $t \in T \setminus T'$, set $\mathcal N_t:=\{\{t\}\}$.
For every $t \in T'$, set $\mathcal N_t:=\{ N_t(f,j)\mid f\in\mathcal F, j<\omega\}$, where
$$N_{t}(f,j) =\{t\} \cup\biguplus\{(a_{t,i}(f(i)), B_{t}(i)] \mid j\le i <\omega\}.$$

Since $\mathcal F$ is closed under pointwise maximum, $\mathcal N_t$ is indeed closed under intersections. 
In addition, for every element $s$ of a neighborhood $N_{t}(f,j)$, there exists $N\in\mathcal N_s$ with $N\s N_t(f,j)$. Specifically:
\begin{itemize}
\item If $s\in T\setminus T'$, then $N:=\{s\}$ does the job;
\item If $s\in T'\setminus\{t\}$, then there exists a unique $i\in\omega\setminus j$ such that $s\in (a_{t,i}(f(i)), B_{t}(i)]$,
and so by picking a large enough $k$ to satisfy $(a_{t,i}(f(i))\s B_{s}(k)$,
we get that $N_s(g,k+1)\s N_t(f,j)$ for any choice of $g\in\mathcal F$.
\end{itemize}

As $\bigcap\mathcal N_t=\{t\}$ for every $t\in T$, we altogether conclude that $X=(T,\tau)$ is a $\mathsf T_1$ topological space.
As $|\mathcal N_t|\le|\mathcal F\times\omega|=\mathfrak b$ for every $t\in T$,
we get that $\chi(X)\le\mathfrak b$.
Since $X$ is $\mathsf T_1$, to show that $X$ is regular, it suffices to prove that the space $X$ is zero-dimensional.

\begin{claim} Every $N\in\bigcup_{t\in T}\mathcal N_t$ is $\tau$-closed.
\end{claim}
\begin{proof} Let $t\in T'$, $f\in\mathcal F$, $j<\omega$, and we shall show that that $N_t(f,j)$ is $\tau$-closed.
To this end, let $s\in T\setminus N_t(f,j)$. 

$\br$ If $s\notin T'$, then $\{s\}$ is a neighborhood of $s$ disjoint from $N_t(f,j)$.

$\br$ If $s\in T'$ and $s\s B_t(0)$, then $N_s(g,0)$ is readily disjoint from $N_t(f,j)$ for any choice of $g\in\mathcal F$.

$\br$ If $s\in T'$ and $B_t(i)\s s\s B_t(i+1)$, then find a large enough $k<\omega$ such that $B_t(i)\s B_s(k)$,
and note that $N_s(g,k+1)$ is disjoint from $N_t(f,j)$ for any choice of $g\in\mathcal F$.

$\br$ If $s\in T'$ and $s\notin t_\downarrow$, then $r:=s\cap t$ is an element of $T$ that
constitutes the meet of $s$ and $t$.
Find a large enough $k$ such that $r\s B_s(k)$
and note that for any choice of $g\in\mathcal F$, $N_s(g,k+1)$ is disjoint from $t_\downarrow$,
and hence from $N_t(f,j)$.
\end{proof}

\begin{claim}\label{claim213} $X \rightarrow (\top(\omega + 1))^1_{\omega}$.
\end{claim}
\begin{proof}
Let $c: T \rightarrow \omega$ be a given a coloring.
It suffices to find a $t \in T'$ such that $\{ s\in B_T\mid c(s)=c(t)\}$ is infinite.
Towards a contradiction, suppose that $\{ s\in B_T\mid c(s)=c(t)\}$ is finite for every $t \in T'$.
It follows that we may define a function $d: T \rightarrow \omega\times 2\times\omega$ by recursion on the levels of $T$, as follows:
$$d(t) := \begin{cases}\langle c(t), 1,\max\{0,n+1\mid \exists s\in B_t\,[c(s)=c(t)\ \&\ d(s)=\langle c(s),1,n\rangle]\}\rangle,&\text{if }t\in T'\\
\langle c(t),0,0\rangle,&\text{otherwise}.\end{cases}$$

Recalling that $(T,E_{\vec A})$ is uncountably chromatic, 
we may now find $\{s,t\}\in E_{\vec A}$ such that $d(s)=d(t)$.
By possibly switching the roles of $s$ and $t$, we may assume that $t \in T'$ and $s \in A_t$.
As $t\in T'$, it follows that $d(t)=(c(t),1,m)$ for some $m<\omega$.
As $d(s)=d(t)$, it follows that $c(s)=c(t)$ and $s\in T'$, and hence $s\in B_t$. But then the definition of $d(t)$ implies that the third coordinate of $d(t)$ is bigger than the corresponding one of $d(s)$. 
This is a contradiction.
\end{proof}

\begin{claim} $X \nrightarrow (\top(\omega^2 + 1))^1_1$.
\end{claim}
\begin{proof} Towards a contradiction, suppose that $\phi:\omega^2+1\rightarrow X$ is an homeomorphism.
For every $n<\omega$, since $\omega\cdot(n+1)$ is an accumulation point of the interval $A_n:=({\omega\cdot n},{\omega\cdot(n+1)})$,
the singleton	 $\{\phi(\omega\cdot(n+1))\}$ cannot be $\tau$-open, so that the node $t_n:=\phi({\omega\cdot(n+1)})$ must be in $T'$
and	$\phi[A_n]$ must contain an infinite sequence converging to $t_n$.
Likewise, $\{t_n\mid n<\omega\}$ must contain an infinite sequence converging to the node $t_\omega:=\phi(\omega^2)$.
It thus follows that there exists a strictly increasing and continuous map $\psi:\omega^2+1\rightarrow\omega^2+1$ such that 
$\phi\circ \psi$ is a strictly increasing and continuous map from $\omega^2+1$ to $T$.
For notational simplicity, we assume $\psi$ is the identity, so that $\langle t_n\mid n<\omega\rangle$
is a strictly increasing sequence of nodes in $T'$ converging to $t_\omega$.
In particular, $t_\omega \in T\restriction({E^{\kappa}_{\omega}\cap \acc^+(E^{\kappa}_{\omega})})$.

As $\otp(B_{t_\omega})=\omega<\omega^2=\otp(\phi[\omega^2])$, we may fix a map $d:\omega\rightarrow \phi[\omega^2]\setminus B_t$
such that $\langle d(n)\mid n<\omega\rangle$ is a strictly increasing increasing sequence of nodes converging to $t_\omega$.
Consequently, the following set is infinite:
$$A := \{i\in\omega\setminus\{0\}\mid (B_t(i-1),B_t(i)]\text{ has an element of }\im(d)\}.$$ 

It follows that for every $i\in A$, we may let $$m_i := \max\{m <\omega \mid B_t(i-1)\subsetneq d(m) \s B_t(i)\}.$$

Define a function $g:A \rightarrow \omega$ defined via $$g(i) := \min\{j <\omega \mid d(m_{n})\s a_{t,i}(j) \}.$$

Recalling that $\mathcal F$ was given by Claim~\ref{claim211},
we now pick $f \in\mathcal F$ such that $I:=\{ n\in A\mid g(n)\le f(n)\}$ is infinite.
For every $i\in I$,  it is the case that
$$B_t(i-1)\subsetneq d(m_i)\s a_{t,i}(g(i))\s a_{t,i}(f(i))\subsetneq B_t(i).$$

Therefore, for every node $s$ in the set $D:=\{ d(m_i)\mid i\in I\}$,
there exists an $i\in I$ such that $D\cap (B_t(i-1),B_t(i)]=\{ s\}$.
So $D$ is an infinite discrete subset of the compact set $\phi[\omega^2+1]$. This is a contradiction.
\end{proof}
It now follows from \cite[Theorem~1]{MR911048} that $\chi(X)\ge\mathfrak b$. Altogether, the space $X$ is as sought.
\end{proof}

We are now ready to prove Theorem~\ref{thma}.

\begin{cor} There exists a zero-dimensional regular space $X$ of size continuum, of character $\mathfrak b$,
satisfying $X\rightarrow(\top{\omega+1})^1_\omega$, but not $X\rightarrow(\top{\omega^2+1})^1_1$.
\end{cor}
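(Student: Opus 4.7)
The plan is to reduce the corollary to an application of Theorem~\ref{spacefromtree}: one needs to exhibit, in $\zfc$, a streamlined tree $T \s {}^{<\kappa}H_\kappa$ of cardinality continuum, together with a ladder system $\vec A$ over $T' := T\restriction E^\kappa_\omega$ such that the graph $(T, E_{\vec A})$ is uncountably chromatic. The choice of tree is easy: with $\kappa := \omega_1$, I would take $T := {}^{<\omega_1}\omega$, which is visibly streamlined and has cardinality $|T| = 2^{\aleph_0}$, since each countable infinite level carries $2^{\aleph_0}$ nodes and there are $\aleph_1$ many levels in all.

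The main obstacle lies in constructing the ladder system $\vec A$ whose edge-graph is uncountably chromatic in $\zfc$, i.e., without appeal to $\diamondsuit$. For this I would invoke known $\zfc$ constructions: fix a $C$-sequence $\vec C = \langle C_\alpha \mid \alpha \in E^{\omega_1}_\omega\rangle$ whose associated graph $(\omega_1, E_{\vec C})$ is uncountably chromatic (such $\vec C$ exist in $\zfc$ via Todorcevic's walks on ordinals), and lift it to $T$ by declaring $A_t := \{t\restriction\beta \mid \beta \in C_{\dom(t)}\}$ for every $t \in T'$. Given any candidate coloring $c : T \rightarrow \omega$, restricting $c$ to a single cofinal branch $b \in {}^{\omega_1}\omega$ yields a coloring $c_b(\alpha) := c(b\restriction\alpha)$ of $\omega_1$, which cannot be proper for the graph $E_{\vec C}$; this produces some $\beta \in C_\alpha$ with $c(b\restriction\beta) = c(b\restriction\alpha)$, witnessing a monochromatic $E_{\vec A}$-edge in $T$ and confirming that $(T,E_{\vec A})$ is uncountably chromatic.

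With these data in hand, Theorem~\ref{spacefromtree} immediately delivers a zero-dimensional regular topology $\tau$ on $T$ making $X := (T,\tau)$ a space of size $2^{\aleph_0}$ and character $\mathfrak b$ satisfying $X\rightarrow(\top{\omega+1})^1_\omega$ and $X\nrightarrow(\top{\omega^2+1})^1_1$, as required. The conceptual hard part is thus entirely concentrated in the $\zfc$-construction of the uncountably chromatic $C$-sequence; once that is cited from prior work, everything else is an immediate invocation of the previously established theorem.
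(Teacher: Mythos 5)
Your reduction to Theorem~\ref{spacefromtree} is the right move, and the choice of a streamlined tree of size continuum together with the ``lift a ladder system along restrictions'' idea is structurally fine. The fatal gap is the input you claim from prior work: there is \emph{no} $\zfc$ construction of a ladder system $\vec C=\langle C_\alpha\mid\alpha\in E^{\omega_1}_\omega\rangle$ whose associated graph $(\omega_1,E_{\vec C})$ is uncountably chromatic. Such an object is precisely an uncountably chromatic Hajnal--M\'at\'e graph over $\omega_1$ (equivalently, a ladder system satisfying a weak $\clubsuit$-type guessing of the colour classes), and its existence is independent of $\zfc$: it follows from $\diamondsuit$ (indeed from $\clubsuit$), but it consistently fails. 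This is exactly why Section~\ref{sec2} of the paper is devoted to \emph{forcing} such graphs (e.g.\ by adding Cohen reals) and why Theorem~\ref{thmb} is stated only for forcing extensions rather than outright in $\zfc$. Walks on ordinals do give strong negative square-bracket partition relations for colourings of \emph{pairs}, but they do not produce a ladder system whose one-dimensional colouring (chromatic) number is uncountable; the two statements are not interchangeable. Once the existence of $\vec C$ fails, your branch-restriction argument has nothing to run on, so the whole construction collapses.

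The paper circumvents exactly this obstruction by not working over $\omega_1$ at all: it takes Soukup's $\zfc$ tree $T(S)=\{x\s\omega_1\mid\acc^+(x)\s x\s S\}$ for a stationary co-stationary $S$, which comes equipped with a sequence $\vec C=\langle C_x\mid x\in T(S)\rangle$ whose graph is uncountably chromatic \emph{on the tree}; the uncountable branching of $T(S)$ (which also accounts for $|T(S)|=2^{\aleph_0}$) is what makes a $\zfc$ construction possible where the linear case fails. The remaining work in the paper's proof --- streamlining $T(S)$ via the collapsing maps $\pi_x$ and carefully transferring $\vec C$ to a genuine ladder system $\vec A$ over $T\restriction\acc(\omega_1)$ while preserving uncountable chromaticity (handling the nodes $x$ with $C_x$ finite separately via a parity trick on the auxiliary colouring $d$) --- is precisely the content your proposal is missing. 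If you want to salvage your approach, you would need to cite and adapt a tree-based $\zfc$ construction of this kind rather than a ladder system on $\omega_1$ itself.
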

\begin{proof} By Theorem~\ref{spacefromtree},
it suffices to find a streamlined tree $T\s{}^{<\omega_1}\omega_1$ of size continuum,
and a ladder system $\vec A$ over $T':=T\restriction\acc(\omega_1)$
such that the graph $(T,E_{\vec A})$ is uncountably chromatic.
A tree with the same key features was constructed by D.~Soukup in \cite[Theorem~3.5]{MR3383252}, though it was not streamlined.
By abstract nonsense considerations (see \cite[Lemma~2.5]{paper23}), this should not make any difference. 
As the argument in \cite{paper23} does not deal with the adjacent ladder system, we spell out the details in here.

Soukup's tree is the tree $T(S):=\{ x\s\omega_1\mid \acc^+(x)\s x\s S\}$ for an arbitrary choice of a stationary and co-stationary subset $S$ of $\omega_1$,
ordered by the end-extension relation, $\sq$. It comes equipped with a sequence $\vec C=\langle C_x\mid x\in T(S)\rangle$ such that 
$C_x$ is either a finite subset of $x_\downarrow$ or a cofinal subset of $x_\downarrow$ of order-type $\omega$.
In addition, the corresponding graph $(T(S),\{ \{y,x\}\mid x\in T(S), y\in C_x\})$ is uncountably chromatic.

As $S$ is stationary, $T(S)$ contains infinite sets.
As $S$ is co-stationary, every element of $T(S)$ is countable. Altogether  $|T(S)|=2^{\aleph_0}$.
As every $x\in T(S)$ is a closed countable set of countable ordinals, 
its corresponding collapsing map $\pi_x:\otp(x)\rightarrow x$ is an element of $\bigcup_{\beta\in\nacc(\omega_1)}{}^\beta\omega_1$.
In addition, for every pair $x\sqsubset y$ of nodes in $T(S)$, it is the case that $\pi_x\subset\pi_y$.
Thus, altogether, $$T:=\{ \pi_x\restriction\alpha\mid x\in T(S), \alpha<\omega_1\}$$ is a streamlined tree 
satisfying:
\begin{itemize}
\item $x\mapsto \pi_x$ forms an order-isomorphism from $(T(S),{\sq})$ to $(T\restriction\nacc(\omega_1),{\s})$;
\item every element of $T\restriction\acc(\omega_1)$ admits a unique immediate successor.\footnote{Indeed, the immediate successor of a node $t\in T\restriction\acc(\omega_1)$ is $\pi_x$ for $x:=\im(t)\cup\{\sup(\im(t))\}$.}
\end{itemize}

We shall now define the ladder system $\vec A=\langle A_t\mid t\in T'\rangle$, for $T':=T\restriction\acc(\omega_1)$, as follows.
Given $t\in T\restriction\acc(\omega_1)$, let ${x_t}$ denote the unique element of $T(S)$ such that $\pi_{x_t}$ is the immediate successor of $t$.
Now consider the following possibilities:

$\br$ If $|C_{x_t}|<\aleph_0$, then let $A_t$ be an arbitrary cofinal subset of $t_\downarrow$ of order-type $\omega$. 

$\br$ Otherwise, $C_{x_t}$ is a cofinal subset of $({x_t})_\downarrow$ of order-type $\omega$,
and hence $$A_t:=\{\pi_y\restriction\sup(\otp(y))\mid y\in C_{x_t}\}$$ is a cofinal subset of $t_\downarrow$ of order-type $\omega$.

\begin{claim} The graph $(T,E_{\vec A})$ is uncountably chromatic.
\end{claim}
\begin{proof} Let $c:T\rightarrow\omega$ be given, and we shall find $s\subset t$ such that $c(s)=c(t)$.

As in the proof of Claim~\ref{claim213}, by recursion on the levels of the tree we may construct a coloring $d:T(S)\rightarrow\omega$ satisfying the following for every $x\in T(S)$:
\begin{enumerate}
\item  If $C_x$ is finite, then $d(x)$ is an odd positive integer that does not belong to $\{ d(y)\mid y\in C_x\}$;
\item If $C_x$ is infinite, then $d(x)=c(\pi_x\restriction\sup(\otp(x)))\cdot 2$.
\end{enumerate}

As the graph $(T(S),\{ \{y,x\}\mid x\in T(S), y\in C_x\})$ is uncountably chromatic,
we now pick a pair $y\sqsubset x$ of nodes in $T(S)$ such that $d(y)=d(x)$.
Denote:
\begin{itemize}
\item $t:=\pi_x\restriction\sup(\otp(x))$, and 
\item $s:=\pi_y\restriction\sup(\otp(y))$.
\end{itemize}

As $d(x)=d(y)$, by the choice of $d$, $C_x$ cannot be finite, so the only other option is that
$C_x$ is a cofinal subset of $x_\downarrow$ of order-type $\omega$. In particular, $x_\downarrow$ cannot have a maximal element,
and hence $\otp(x)=\alpha+1$ for some $\alpha\in\acc(\omega_1)$.
Therefore, $\pi_x$ is an immediate successor of the above node $t$, so that $t\in T\restriction\acc(\omega_1)$ and $x_t=x$. 
It thus follows from the definition of $A_t$ that $s\in A_t$.

Finally, as $C_x$ is not finite, $d(x)=c(t)\cdot2$. From $d(y)=d(x)$ being even, we then infer that $d(y)=c(s)\cdot2$.
Altogether, $c(s)=c(t)$, as sought.
\end{proof}
This completes the proof.
\end{proof}

\section{Forcing highly chromatic Hajnal-M\'at\'e graphs}\label{sec2}
A \emph{Hajnal-M\'at\'e graph} is a graph of the form $G=(\kappa,E)$, where $\kappa$ is a cardinal,
$E$ is a subset of $[\kappa]^2$, and for every pair $\beta<\gamma$ of ordinals from $\kappa$,
$\sup\{ \alpha<\beta\mid \{\beta,\gamma\}\in E\}<\beta$.
The existence of an uncountably chromatic Hajnal-M\'at\'e graph over $\omega_1$
gives rise to a tree $T$ and a ladder system $\vec A$ satisfying the hypotheses of Theorem~\ref{spacefromtree} by identifying $\omega_1$ with
the streamlined tree $T:={}^{<\omega_1}1$.

In this section, we highlight a class of notions of forcing that inevitably add highly chromatic Hajnal-M\'at\'e graphs.

\begin{defn} Let $\mathbb P=(P,{\le})$ denote a notion of forcing, and $\lambda$ denote an infinite regular cardinal.
\begin{itemize}
\item $\mathbb P$ is \emph{${}^\lambda\lambda$-bounding} 
iff for every $g\in{}^\lambda\lambda\cap V^{\mathbb P}$,
there exists some $f\in{}^\lambda\lambda\cap V$ such that $g(\alpha)\le f(\alpha)$ for all $\alpha<\lambda$;
\item $\mathbb P$ satisfies the \emph{$\lambda^+$-stationary chain condition}
(\emph{$\lambda^+$-stationary-cc}, for short) iff for every sequence $\langle p_\delta\mid \delta<\lambda^+\rangle$ of conditions in $\mathbb P$
there are a club $D\s\lambda^+$ and a regressive map $h:D\cap E^{\lambda^+}_{\lambda}\rightarrow\lambda^+$ such that for all
$\gamma,\delta\in\dom(h)$, if  $h(\gamma)=h(\delta)$, then $p_\gamma$ and $p_\delta$ are compatible.
\end{itemize}
\end{defn}

\begin{thm} Suppose that $\lambda$ is an infinite regular cardinal,
and $\mathbb P$ is a $\lambda^+$-stationary-cc notion of forcing satisfying at least one of the following:
\begin{enumerate}
\item $\mathbb P$ preserves the regularity of $\lambda$,
and is not ${}^\lambda\lambda$-bounding;
\item $\mathbb P$ forces that $\cf(\lambda)<|\lambda|$. In addition, $\cf(\ns_\lambda,{\s})=\lambda^+$;
\item In $V^{\mathbb P}$, there exists a cofinal subset $\Lambda\s\lambda$ such that for every function $f\in{}^\lambda\lambda\cap V$, there exists some $\xi\in\Lambda$ with $f(\xi)<\min(\Lambda\setminus(\xi+1))$.
\end{enumerate}

Then, in $V^{\mathbb P}$, there exists a sequence $\langle C_\delta\mid \delta\in E^{\lambda^+}_\lambda\rangle$ satisfying the following:
\begin{itemize}
\item For every $\delta\in E^{\lambda^+}_\lambda$, $C_\delta$ is a club in $\delta$ of order-type $\lambda$;
\item For every coloring $c:E^{\lambda^+}_\lambda\rightarrow\lambda$, there are $\gamma,\delta\in E^{\lambda^+}_\lambda$ such that $\gamma\in C_\delta$
and $c(\gamma)=c(\delta)$.
\end{itemize}
\end{thm}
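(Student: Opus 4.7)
The plan is to extract from each of (1)--(3) a single common consequence in $V^{\mathbb P}$ and then run a uniform guessing-defeat argument coordinated by the $\lambda^+$-stationary-cc. Specifically, I would first verify that under any of (1)--(3), $\mathbb P$ adds in $V^{\mathbb P}$ a cofinal subset $A\s\lambda$ with an \emph{escape} property: for every $f\in{}^\lambda\lambda\cap V$ there are cofinally many $\xi\in A$ with $f(\xi)<\min(A\setminus(\xi+1))$. Clause (3) states this verbatim; for (1), a name for a function $\dot h\in{}^\lambda\lambda$ not dominated by $V$-functions, thinned diagonally using that $\lambda$ remains regular, yields such an $A$; for (2), the hypothesis $\cf(\ns_\lambda,{\s})=\lambda^+$ fixes in $V$ a $\s$-cofinal sequence $\langle E_\eta\mid\eta<\lambda^+\rangle$ of clubs of $\lambda$, while $\cf^{V^{\mathbb P}}(\lambda)<|\lambda|$ forces a new short cofinal sequence in $\lambda$ threading each $E_\eta$ on a tail, whose image can serve as $A$.

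Granted $A$, fix in $V$ an assignment $\langle D_\delta\mid\delta\in E^{\lambda^+}_\lambda\rangle$ of clubs of $\delta$ of order-type $\lambda$, and in $V^{\mathbb P}$ define $C_\delta$ to be a club of $\delta$ of order-type exactly $\lambda$ whose accumulation skeleton is $\{D_\delta(\xi)\mid\xi\in A\}$: close that set under suprema below $\delta$, and in case (2) pad the resulting set inside the gaps by pieces of $D_\delta$ to reach full order-type $\lambda$. To verify the guessing property, suppose toward a contradiction that $c:E^{\lambda^+}_\lambda\to\lambda$ in $V^{\mathbb P}$ has no witnessing pair $\gamma\in C_\delta$. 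Fix a name $\dot c$ for $c$ and, in $V$, for each $\delta$, a maximal antichain $A_\delta\s\mathbb P$ of conditions deciding $\dot c(\delta)$ together with a choice of $p_\delta\in A_\delta$ and its forced value $\beta_\delta$. The $\lambda^+$-stationary-cc applied to $\langle p_\delta\rangle$, together with Fodor and pigeonhole on $\beta_\delta$, produces a stationary $S\in V$ on which $\beta_\delta=:\beta^*$ is constant and all $p_\delta$ are pairwise compatible.

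In the generic extension, a density argument using pairwise compatibility yields a cofinal $T_G\s S$ with $p_\delta\in G$ for every $\delta\in T_G$, so $c(\delta)=\beta^*$ for all such $\delta$. The assumption that $c$ has no witnessing pair then forces, for each $\delta\in T_G$, a ground-model bound on the indices $\xi$ at which $D_\delta(\xi)\in T_G$; encoding this bound uniformly in $\delta$ as a single $f\in{}^\lambda\lambda\cap V$ and applying the escape property of $A$ produces a witnessing pair, a contradiction. The main obstacle is this last step, namely coordinating the ground-model encoding of the bound across $\delta\in S$ so that a single $V$-function $f$ captures the obstruction: the pairwise compatibility of the $p_\delta$'s on $S$ is crucial here, because it allows us to treat the values of $\dot c$ on $S$ as essentially determined in $V$. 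A secondary obstacle is case (2) of the escape reformulation, which hinges on the specific behavior of Prikry- or Magidor-type forcings on $V$-stationary subsets of $\lambda$ and is where the hypothesis $\cf(\ns_\lambda,{\s})=\lambda^+$ truly earns its keep.
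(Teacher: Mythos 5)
Your overall architecture matches the paper's: reduce clauses (1) and (2) to the escape property of clause (3) (the paper cites \cite[Proposition~3.1]{paper26} for exactly this), then use the $\lambda^+$-stationary-cc together with a pigeonhole on the decided values to produce a stationary set $\Gamma\in V$ of pairwise-compatible conditions forcing a single colour, and finish by finding $\gamma\in C_\delta\cap\Gamma$. The endgame is also slightly cleaner in the paper than in your sketch: one does not need a cofinal $T_G\s S$ of conditions simultaneously in $G$ (which pairwise compatibility alone does not give you); it suffices that a single common extension $q$ of $p_\gamma$ and $p_\delta$ forces $c(\gamma)=c(\delta)$, which already establishes density below the given condition.

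The genuine gap is in your definition of $C_\delta$ and, consequently, in the last step you yourself flag as the main obstacle. You take a ground-model club $D_\delta$ of order-type $\lambda$ and let $C_\delta$ be essentially $\{D_\delta(\xi)\mid\xi\in A\}$ closed under suprema. Such a set has no reason to meet the stationary set $\Gamma$ at all: since $\otp(D_\delta)=\lambda$, every element of $D_\delta$ of cofinality $\lambda$ lies in $\nacc(D_\delta)$, and $\Gamma\s E^{\lambda^+}_\lambda$ may simply be disjoint from $D_\delta$. The obstruction ``which indices $\xi$ satisfy $D_\delta(\xi)\in\Gamma$'' is therefore not a domination-type obstruction that the escape property of $A$ can defeat --- the relevant index set can be empty, and no $f\in{}^\lambda\lambda\cap V$ encodes that. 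The paper resolves this by fattening the ladder: it fixes in $V$ both the inverse collapse $\pi_\delta$ of a club in $\delta$ \emph{and} a bijection $\psi_\delta:\lambda\leftrightarrow\delta$, and defines $C_\delta$ as the closure of $\pi_\delta[\Lambda]$ together with the traces $\psi_\delta[\alpha^+]\cap(\pi_\delta(\alpha),\pi_\delta(\alpha^+))$ for successive $\alpha<\alpha^+$ in $\Lambda$ (each of size $<\lambda$, so $\otp(C_\delta)\le\lambda$ is preserved). Then, given $\Gamma\in V$ accumulating at $\delta$, the two ground-model functions $f_0$ (how far past $\pi_\delta(\alpha)$ one must go to meet $\Gamma$) and $f_1$ (the $\psi_\delta$-index of such a witness) are combined into a single $f\in{}^\lambda\lambda\cap V$, and the escape property of $\Lambda$ guarantees that for cofinally many $\alpha$ the interpolated piece $\psi_\delta[\alpha^+]$ is long enough to contain an element of $\Gamma$ in the gap. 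This interpolation of initial segments of a bijection $\lambda\leftrightarrow\delta$ is the missing idea; without it the uniform encoding you propose in your final paragraph cannot be carried out.
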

\begin{proof} By \cite[Proposition~3.1]{paper26}, Clause~(3) follows from Clauses (1) and (2), so hereafter, we shall assume Clause~(3).

Work in $V$. Write $\Delta:=E^{\lambda^+}_\lambda$.
For each $\delta\in \Delta$, let $\pi_\delta:\lambda\rightarrow \delta$ denote the inverse collapse of some club in $\delta$,
and let $\psi_\delta:\lambda\leftrightarrow\delta$ be some bijection.

Next, let $G$ be $\mathbb P$-generic over $V$, and work in $V[G]$.
By Clause~(3) and the proof of \cite[Lemma~3.2]{paper26}, we may fix a club $\Lambda\s\lambda$ of order-type $\cf(\lambda)$,
such that for every function $f\in{}^\lambda\lambda\cap V$, $\sup\{\xi\in\Lambda\mid f(\xi)<\min(\Lambda\setminus(\xi+1))\}=\lambda$.

Let $\delta\in \Delta$.
Clearly, $B_\delta:=\pi_\delta[\Lambda]$ is a club in $\delta$ of order-type $\cf(\lambda)$.
Next, let $C_\delta$ be the ordinal closure below $\delta$ of the following set
$$B_\delta\cup\bigcup\{ \psi_\delta[\alpha^+]\cap(\pi_\delta(\alpha),\pi_\delta(\alpha^+))\mid \alpha\in\Lambda\ \&\ \alpha^+=\min(\Lambda\setminus(\alpha+1))\}.$$
Note that, for every pair $\beta<\beta^+$ of successive elements of $\pi_\delta[\Lambda]$, $C_\delta\cap(\beta,\beta^+)$ is 
covered by the closure of $\psi_\delta[\otp(\Lambda\cap\beta^+)]$, which is a set of size $<\lambda$. Therefore, $\otp(C_\delta)\le\lambda$.
\begin{claim}\label{claim411} For every $\Gamma\in[\lambda^+]^{\lambda^+}$ from $V$, for every $\delta\in \acc^+(\Gamma)\cap \Delta$,
it is the case that $\sup(C_\delta\cap \Gamma)=\delta$.
\end{claim}
\begin{proof} Let $\Gamma\in[\lambda^+]^{\lambda^+}$ in $V$. Let $\delta\in\Delta\cap\acc^+(\Gamma)$ and $\epsilon<\delta$; we shall find $\gamma\in \Gamma\cap C_\delta$ above $\epsilon$.
As $\delta\in\acc^+(\Gamma)$, we may define a function $f_0:\lambda\rightarrow\lambda$ via
$$f_0(\alpha):=\min\{\alpha'<\lambda\mid(\pi_\delta(\alpha),\pi_\delta(\alpha'))\cap\Gamma\neq\emptyset\}.$$
Then, we may define a function $f_1:\lambda\rightarrow\lambda$ via:
$$f_1(\alpha):=\min\{ i<\lambda\mid \psi_\delta(i)\in (\pi_\delta(\alpha),\pi_\delta(f_0(\alpha)))\cap\Gamma\}.$$
Define $f:\lambda\rightarrow\lambda$ via $f(\alpha):=\max\{f_0(\alpha),f_1(\alpha)\}$. As $\Gamma\in V$, the function $f$ is in ${}^\lambda\lambda\cap V$,
and hence $A:=\{\xi\in\Lambda\mid f(\xi)<\min(\Lambda\setminus(\xi+1))\}$ is cofinal in $\lambda$.
Pick a large enough $\alpha\in A$ such that $\pi_\delta(\alpha)\ge\epsilon$. Denote $\alpha^+:=\min(\Lambda\setminus(\alpha+1))$.
Then $\alpha':=f_0(\alpha)$ and $i:=f_1(\alpha)$ are both less than $<\alpha^+$.
So 
$$\psi_\delta(i)\in\psi_\delta[\alpha^+]\cap (\pi_\delta(\alpha),\pi_\delta(\alpha^+))\cap\Gamma,$$
meaning that $\psi_\delta(i)$ is an element of $C_\delta\cap\Gamma$ above $\epsilon$.
\end{proof}

Work in $V$.
Suppose that $p$ is a condition forcing that $\dot c$ is  a name for a function from $\Delta$ to $\lambda$.
For each $\delta\in\Delta$, let $p_\delta$ be a condition extending $p$ and deciding $\dot c(\delta)$ to be, say, $\tau_\delta$.
Fix a club $D\s\lambda^+$ and a regressive map $h:D\cap E^{\lambda^+}_{\lambda}\rightarrow\lambda^+$ such that for all
$\gamma,\delta\in\dom(h)$, if  $h(\gamma)=h(\delta)$ then $p_\gamma$ and $p_\delta$ are compatible.

Find $(\tau,\eta)\in\lambda\times\lambda^+$ for which $$\Gamma:=\{\delta\in\Delta\cap D\mid \tau_\delta=\tau\ \&\ h(\delta)=\eta\}$$ is stationary.
As $\acc^+(\Gamma)$ is a club (in $V$), Claim~\ref{claim411} provides us with a $\delta\in\Gamma$ such that $\sup(C_\delta\cap\Gamma)=\delta$.
Pick $\gamma\in C_\delta\cap\Gamma$. As $h(\delta)=\eta=h(\gamma)$, we may pick some $q$ extending $p_\delta$ and $p_\gamma$. 
Then, $q$ is an extension of $p$ forcing that $\gamma,\delta\in\Delta$ and $c(\gamma)=\tau=c(\delta)$.
\end{proof}

\begin{cor} If $\lambda$ is a measurable cardinal, then in the forcing extension by Prikry forcing using a normal measure on $\lambda$, 
there exists a Hajnal-M\'at\'e graph over $\lambda^+$ of chromatic number $\lambda^+$.\qed
\end{cor}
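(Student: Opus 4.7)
The plan is to apply the preceding theorem to Prikry forcing $\mathbb{P}$ with a normal measure $U$ on $\lambda$, and then extract a Hajnal-M\'at\'e graph from the resulting sequence.

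First, I would verify that $\mathbb{P}$ is $\lambda^+$-stationary-cc. A Prikry condition is a pair $(s,A)$ with $s\in\lambda^{<\omega}$ strictly increasing and $A\in U$ satisfying $\min(A)>\max(s)$, and two conditions sharing a stem are compatible via intersection of their measure components. Since $\lambda^{<\omega}=\lambda$, fix a bijection $\rho:\lambda^{<\omega}\to\lambda$. Given $\langle p_\delta=(s_\delta,A_\delta)\mid\delta<\lambda^+\rangle$, the map $h:E^{\lambda^+}_\lambda\to\lambda^+$ defined by $h(\delta):=\rho(s_\delta)$ is regressive (since $h(\delta)<\lambda\le\delta$) and makes $p_\gamma,p_\delta$ compatible whenever $h(\gamma)=h(\delta)$; take $D:=\lambda^+$.

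Next, I would verify Clause~(3) taking $\Lambda$ to be the Prikry sequence $\{\xi_n\mid n<\omega\}$. For each $f\in{}^\lambda\lambda\cap V$, the set $B_f:=\{\eta<\lambda\mid f[\eta]\subseteq\eta\}$ is a club in $\lambda$ (as $\lambda$ is inaccessible), hence lies in $U$ by normality. Consequently, $\{(s,A)\in\mathbb{P}\mid A\subseteq B_f\}$ is dense in $\mathbb{P}$, and since the generic meets it one has $\xi_n\in B_f$ for a tail of $n$; in particular $f(\xi_n)<\xi_{n+1}=\min(\Lambda\setminus(\xi_n+1))$ for cofinally many $n$. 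The preceding theorem then yields in $V^{\mathbb{P}}$ a sequence $\langle C_\delta\mid\delta\in\Delta\rangle$, with $\Delta=E^{\lambda^+}_\lambda$ as computed in $V$, such that each $C_\delta$ is a club in $\delta$ and every coloring $c:\Delta\to\lambda$ admits a pair $\gamma,\delta\in\Delta$ with $\gamma\in C_\delta$ and $c(\gamma)=c(\delta)$.

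Finally, I would extract a Hajnal-M\'at\'e graph. Since $\cf^{V^{\mathbb{P}}}(\lambda)=\omega$, every $\delta\in\Delta$ has cofinality $\omega$ in $V^{\mathbb{P}}$, so each $C_\delta$ admits a cofinal $\omega$-subsequence $\tilde{C}_\delta$. The graph $H:=(\lambda^+,\{\{\alpha,\delta\}\mid\delta\in\Delta,\,\alpha\in\tilde{C}_\delta\})$ is Hajnal-M\'at\'e, its back-neighborhoods being either empty or of order-type $\omega$. The main obstacle is ensuring that $H$ still has chromatic number $\lambda^+$, since the theorem's chromatic witness $\gamma$ sits in $C_\delta$ but not necessarily in the sparser $\tilde{C}_\delta$. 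My plan is to define $\tilde{C}_\delta$ via the Prikry generic---e.g., by composing the increasing enumeration of $C_\delta$ with $\Lambda$---and to revisit the proof of the preceding theorem: given a coloring $c\in V^{\mathbb{P}}$ and the associated stationary same-color set $\Gamma\in V$, Claim~\ref{claim411} gives that $\{\alpha<\lambda\mid C_\delta(\alpha)\in\Gamma\}$ is unbounded in $\lambda$ for $\delta\in\Delta\cap\acc^+(\Gamma)$, and a further density argument in $\mathbb{P}$ of the sort used in verifying Clause~(3) should then force the Prikry sequence to meet this set, delivering the required monochromatic edge in $H$.
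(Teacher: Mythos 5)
Your verification of the hypotheses of the preceding theorem is correct and is, in effect, the entire content of the paper's (unwritten) proof of this corollary: Prikry forcing is $\lambda^+$-stationary-cc because conditions with equal stems are compatible and there are only $\lambda=\lambda^{<\omega}$ stems, and Clause~(3) holds with $\Lambda$ the Prikry sequence because, for $f\in{}^\lambda\lambda\cap V$, the club of closure points of $f$ lies in the normal measure, so a tail of the Prikry sequence consists of such points. Going through Clause~(3) directly is also the right call here, since Clause~(2) would require the extra hypothesis $\cf(\ns_\lambda,{\subseteq})=\lambda^+$.

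The problem is with your last step --- and, to your credit, you have put your finger on exactly the point where the corollary does not follow formally from the theorem. The sets $C_\delta$ it produces are clubs in $\delta$ of order type up to $\lambda$: between consecutive points of $\pi_\delta[\Lambda]$ one inserts the closure of a set of the form $\psi_\delta[\xi_{n+1}]\cap(\pi_\delta(\xi_n),\pi_\delta(\xi_{n+1}))$, which in the Prikry case is typically infinite and accumulates below $\delta$. So the naive graph is not Hajnal--M\'at\'e, and some thinning to non-accumulating (hence order-type-$\le\omega$) lower neighbourhoods is needed. Your proposed repair, however, does not work as described. The set $\{\alpha<\lambda\mid C_\delta(\alpha)\in\Gamma\}$ is not a ground-model set --- $C_\delta$ is itself defined from the Prikry sequence --- so no density argument ``of the sort used for Clause~(3)'' can force $\Lambda$, or its image under the enumeration of $C_\delta$, into it; and even for an unbounded set lying in $V$, a Prikry generic is only guaranteed to eventually enter measure-one sets, not to meet every unbounded set. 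Note also that the monochromatic witnesses produced in the theorem's proof are of the form $\psi_\delta(i)$ with $i$ ranging over all of $\xi_{n+1}$ within the $n$-th block, so a pre-chosen ladder selecting only finitely many points per block has no evident reason to catch them. Closing this gap requires arranging order-type-$\omega$ guessing sets from the outset (as happens automatically in the Cohen case $\lambda=\omega$, where each block is finite), rather than thinning the $C_\delta$'s after the fact; as it stands, your plan leaves the key step unproved.
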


\begin{cor} After forcing to add any number of Cohen reals, there is an uncountably chromatic Hajnal-M\'at\'e graph over $\omega_1$.\qed
\end{cor}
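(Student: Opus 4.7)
The plan is to invoke the preceding theorem with $\lambda=\omega$ and $\mathbb P$ the Cohen forcing $\mathrm{Fn}(\kappa\times\omega,2)$ for the given cardinal $\kappa$; Clause~(1) of the hypothesis is the one to use. Two of its demands are immediate: $\mathbb P$ is ccc, hence preserves $\omega$ as a regular cardinal, and the generic real is not dominated by any ground-model function, so $\mathbb P$ is not ${}^\omega\omega$-bounding. The remaining ingredient is the $\omega_1$-stationary-cc.

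For this, given $\langle p_\delta\mid\delta<\omega_1\rangle$, I would fix a continuous $\in$-chain $\langle M_\delta\mid\delta<\omega_1\rangle$ of countable elementary submodels of some $H_\theta$ containing this sequence, and form the club $D:=\{\delta<\omega_1\mid M_\delta\cap\omega_1=\delta\}$. For $\delta\in D$, write $p_\delta^{\mathrm{lo}}:=\{((i,n),b)\in p_\delta\mid (i,n)\in M_\delta\}$ and $p_\delta^{\mathrm{hi}}:=p_\delta\setminus p_\delta^{\mathrm{lo}}$. Using a coding fixed in $V$, encode $p_\delta^{\mathrm{lo}}$ as an ordinal $h(\delta)<\delta$ in such a way that $h(\gamma)=h(\delta)$ forces $p_\gamma^{\mathrm{lo}}=p_\delta^{\mathrm{lo}}$ as finite functions. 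Since $p_\gamma\in M_\delta$ and $p_\gamma$ is a finite set, all its elements are in $M_\delta$, so the coordinates of $p_\gamma^{\mathrm{hi}}$ lie in $M_\delta\setminus M_\gamma$; meanwhile those of $p_\delta^{\mathrm{hi}}$ lie outside $M_\delta$, and the equality $p_\gamma^{\mathrm{lo}}=p_\delta^{\mathrm{lo}}$ forces the coordinates of $p_\delta^{\mathrm{lo}}$ into $M_\gamma$. All three resulting parts are coordinate-disjoint, so $p_\gamma\cup p_\delta$ is a valid common extension.

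With the theorem applicable, $V^{\mathbb P}$ carries a sequence $\langle C_\delta\mid\delta\in E^{\omega_1}_\omega\rangle$, each $C_\delta$ a club in $\delta$ of order-type $\omega$, such that every coloring $c:E^{\omega_1}_\omega\to\omega$ admits $\gamma\in C_\delta$ with $c(\gamma)=c(\delta)$. Set
$$E:=\{\{\gamma,\delta\}\mid\delta\in E^{\omega_1}_\omega,\ \gamma\in C_\delta\}.$$
For each vertex $\gamma$, the edges joining $\gamma$ to smaller vertices form $C_\gamma$ if $\gamma\in E^{\omega_1}_\omega$ and the empty set otherwise; in either case a non-accumulating subset of $\gamma$, so $(\omega_1,E)$ is a Hajnal-M\'at\'e graph. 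Any $\omega$-coloring of $\omega_1$ restricts to a coloring of $E^{\omega_1}_\omega$, whence the theorem's second bullet supplies a monochromatic edge, witnessing that $(\omega_1,E)$ has uncountable chromatic number.

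The hard part will be the $\omega_1$-stationary-cc verification, specifically the choice of coding that makes $h$ simultaneously regressive and fine enough that equal values yield equal low parts. Everything else is a direct translation of the theorem's conclusion into the Hajnal-M\'at\'e vocabulary.
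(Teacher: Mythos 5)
Your proposal is correct and matches the paper's (implicit) argument: the corollary is stated with a \qed precisely because it is the theorem applied with $\lambda=\omega$ via Clause~(1), plus exactly the translation you give from the sequence $\langle C_\delta\mid\delta\in E^{\omega_1}_\omega\rangle$ to a Hajnal--M\'at\'e graph. The coding step you flag as the hard part is standard and goes through: $p_\delta^{\mathrm{lo}}$ is a finite subset of the countable model $M_\delta=\bigcup_{\alpha<\delta}M_\alpha$, hence lies in some $M_{\alpha}$ with $\alpha<\delta$, so fixing in advance an enumeration of each $M_\alpha$ and shrinking $D$ to ordinals closed under the relevant pairing yields a regressive $h$ with $h(\gamma)=h(\delta)\Rightarrow p_\gamma^{\mathrm{lo}}=p_\delta^{\mathrm{lo}}$.
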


Putting the preceding together with Theorem~\ref{spacefromtree}, we obtain Theorem~\ref{thmb}:

\begin{cor} After forcing to add any number of Cohen reals, there exists a zero-dimensional regular space $X$ of size $\aleph_1$, of character $\mathfrak b$,
satisfying $X\rightarrow(\top{\omega+1})^1_\omega$, but not $X\rightarrow(\top{\omega^2+1})^1_1$.\qed
\end{cor}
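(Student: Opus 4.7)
The plan is to feed the preceding Hajnal-M\'at\'e corollary into Theorem~\ref{spacefromtree}; all that is left is to package the graph as a ladder system over a streamlined tree. Let $V[G]$ be the Cohen extension. Rather than using the stated Hajnal-M\'at\'e corollary as a black box, I would re-invoke the underlying theorem of Section~\ref{sec2} with $\lambda=\omega$ via Clause~(1), since Cohen forcing is $\omega_1$-stationary-cc, preserves $\omega$, and is not ${}^\omega\omega$-bounding. This supplies, in $V[G]$, a sequence $\langle C_\delta\mid \delta\in E^{\omega_1}_\omega\rangle$ of cofinal $\omega$-sequences through the countable limits of cofinality $\omega$, such that for every coloring $c:E^{\omega_1}_\omega\rightarrow\omega$ there exist $\gamma,\delta\in E^{\omega_1}_\omega$ with $\gamma\in C_\delta$ and $c(\gamma)=c(\delta)$.

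Next, set $T:={}^{<\omega_1}1$, which is a streamlined tree of size $\aleph_1$ having exactly one node at each level $\alpha<\omega_1$. Identifying each node with its domain, $T':=T\restriction E^{\omega_1}_\omega$ corresponds to $E^{\omega_1}_\omega$ itself, and $t_\downarrow$ corresponds to the ordinal $\dom(t)$. Define the ladder system $\vec A:=\langle A_\gamma\mid \gamma\in T'\rangle$ by declaring $A_\gamma:=C_\gamma$. By construction, each $A_\gamma$ is a cofinal subset of $\gamma_\downarrow$ of order-type $\omega=\cf(\dom(\gamma))$, so $\vec A$ really is a ladder system over $T'$ in the sense of Section~\ref{sec1}.

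To verify the chromaticity hypothesis of Theorem~\ref{spacefromtree}, let $c:T\rightarrow\omega$ be any coloring. Applying the property of the $C_\delta$'s to $c\restriction E^{\omega_1}_\omega$ yields $\gamma,\delta\in T'$ with $\gamma\in A_\delta$ and $c(\gamma)=c(\delta)$, which is precisely a monochromatic edge of $(T,E_{\vec A})$. Hence Theorem~\ref{spacefromtree} applies and produces a zero-dimensional regular space $X$ on $T$ of size $\aleph_1$ and character $\mathfrak b$ realizing the required partition relations. There is no genuine obstacle: the corollary is an assembly of two ready-made ingredients, and the only substantive observation is that by taking $T$ to be the ``thin'' tree ${}^{<\omega_1}1$, the required ladder system is literally the sequence of $C_\delta$'s produced by the forcing theorem.
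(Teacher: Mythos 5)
Your proposal is correct and follows the paper's own route exactly: the paper likewise derives the space by instantiating the forcing theorem of Section~\ref{sec2} at $\lambda=\omega$ (Clause~(1)) to get an uncountably chromatic Hajnal--M\'at\'e graph over $\omega_1$ in the Cohen extension, and then feeds it into Theorem~\ref{spacefromtree} via the identification of $\omega_1$ with the streamlined tree ${}^{<\omega_1}1$. The only difference is that you spell out the packaging of the $C_\delta$'s as a ladder system, which the paper leaves as a one-line remark at the start of Section~\ref{sec2}.
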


\section{Acknowledgments}

The first author was supported by the European Research Council (grant agreement ERC-2018-StG 802756).
The second author was partially supported by the Israel Science Foundation (grant agreement 203/22)
and by the European Research Council (grant agreement ERC-2018-StG 802756).

Some of the results of this paper were presented by the first author at 
the \emph{Winter School in Abstract Analysis} meeting in \v{S}t\v{e}ke\v{n},  Czech Republic, January 2023.
He thanks the organizers for the opportunity to speak and the participants for their feedback.

\end{document}